\newtheorem{theorem}{Theorem}[section]
\newtheorem{lemma}[theorem]{Lemma}
\title{{\bf Distance spectral radius of a tree with given diameter\thanks{Supported by NSFC
(Nos. 11271315, 11101057, 11171290, 11201407) and BK2012245.}}}
\author{Guanglong Yu$^{a}$\thanks{ E-mail addresses:
yglong01@163.com (Yu).} ~ Shuguang Guo$^{a}$ ~ Mingqing Zhai$^{b}$  ~
\\ ~ \\
{\footnotesize $^a$Department of Mathematics, Yancheng Teachers
University,  Yancheng, 224002, Jiangsu, P.R.
China}\\
{\footnotesize $^b$Department of Mathematics, Chuzhou University, Chuzhou, 239012, Anhui, P.R. China}}
\date{}
\begin{document}
\maketitle

\begin{abstract}
For a connected graph, the distance spectral radius is the largest
eigenvalue of its distance matrix.
In this paper, of all trees with both given order and fixed diameter, the trees with the
minimal distance spectral radius are completely characterized.

\bigskip
\noindent {\bf AMS Classification:} 05C50

\noindent {\bf Keywords:} Distance matrix; Spectral radius; Tree; Diameter
\end{abstract}

\section{Introduction}

\ \ \ \ We use
standard terminology and notation. In a connected graph $G$, the $distance$ between
vertices $v_{i}$ and $v_{j}$, denoted by $d_{G}(v_{i}, v_{j})$ or $d(v_{i}, v_{j})$, is defined to be
the length (i.e. the number of edges) of the shortest path from $v_{i}$
to $v_{j}$. The $distance$ $matrix$ of $G$, denoted by $D(G)=(d_{i,j})_{n\times n}$, is the
$n\times n$ matrix with its $(i, j)$-entry $d_{i,j}$ equal to $d(v_{i}, v_{j})$,
$i, j=1, 2, \ldots, n$. The largest eigenvalue of $D(G)$, denoted by $\varrho(G)$, is called the $distance$ $spectral$
$radius$ of $G$. By the
Perron-Frobenius theorem \cite{OP}, there exists a positive vector corresponding to $\varrho(G)$. We call the unit positive
vector corresponding to $\varrho(G)$ the $distance$ $Perron$ $vector$ of graph $G$.

In \cite{A.T.B}, Balaban et al. proposed the use
of distance spectral radius as a molecular descriptor, while in
\cite{I.G.M}, distance spectral radius was successfully used to infer the extent of
branching and model boiling points of alkane. Therefore, the study
concerning the maximum (minimum) distance spectral radius of a given class of graphs is of great
interest and significance. Recently, the maximum (minimum) distance spectral radius of a given class
of graphs has been studied extensively. The reader is refereed to see \cite{A.I.D, G.I.S}, \cite{DSG}-\cite{B.Z2}.

We now introduce some other notations. For a connected graph $G$, we denote by $N_{G}(v)$ the neighbor
set and $deg(v)$ the degree of vertex $v$ respectively. We denote by $K_{n}$, $P_{n}$ a complete graph, a path of order $n$ respectively. A graph $G$ is called $nontrivial$ if $|V(G)|\geq 2$. Otherwise, $G$ is called trivial. The $diameter$ of a graph is the maximum distance between any pair of vertices. In a graph, if the length of a path $P$ is both equal to the distance between its two end vertices and the diameter of the graph, then
the path $P$ is called a $D$-$path$ of this graph. The $union$ of two simple graphs $H$ and $G$ is the simple graph $G\cup H$ with vertex set $V(G)\cup V(H)$
 and edge set $E(G)\cup E(H)$.

In \cite{X.L.Z}, among all
connected graphs of order $n$ with given diameter, X. Zhang determined the graphs with the minimum distance spectral radius. In \cite{DIF} and \cite{Y.J.Z}, Z. Du et al. and Y. Zhang et al. investigated respectively which trees have the the minimum distance spectral radius among all
trees of order $n$ with given diameter. They only determined the trees with the minimum distance spectral radius among all
trees with even diameter.

Denote by $\mathcal {T}_{1}$ the graph obtained by attaching $n-2k$ $(k\geq2)$ pendant edges to vertex $v_{1}$ of path $P_{2k}=v_{k}v_{k-1}\cdots v_{1}u_{1}u_{2}\cdots u_{k}$ (see Fig. 1.1).

\setlength{\unitlength}{0.6pt}
\begin{center}
\begin{picture}(599,108)
\put(19,46){\circle*{4}}
\put(593,46){\circle*{4}}
\qbezier(19,46)(306,46)(593,46)
\put(336,46){\circle*{4}}
\put(56,46){\circle*{4}}
\put(512,47){\circle*{4}}
\put(217,47){\circle*{4}}
\put(255,46){\circle*{4}}
\put(299,47){\circle*{4}}
\put(553,46){\circle*{4}}
\put(92,46){\circle*{4}}
\put(248,34){$v_{1}$}
\put(212,34){$v_{2}$}
\put(0,45){$v_{k}$}
\put(46,32){$v_{k-1}$}
\put(80,32){$v_{k-2}$}
\put(292,33){$u_{1}$}
\put(329,33){$u_{2}$}
\put(600,45){$u_{k}$}
\put(544,33){$u_{k-1}$}
\put(502,34){$u_{k-2}$}
\put(219,-9){Fig. 1.1. $\mathcal {T}_{1}$}
\put(219,76){\circle*{4}}
\qbezier(255,46)(237,61)(219,76)
\put(238,88){\circle*{4}}
\qbezier(255,46)(246,67)(238,88)
\put(299,76){\circle*{4}}
\qbezier(255,46)(277,61)(299,76)
\put(282,82){\circle*{4}}
\put(269,85){\circle*{4}}
\put(257,87){\circle*{4}}
\put(186,81){$v_{2k+1}$}
\put(220,96){$v_{2k+2}$}
\put(305,74){$v_{n}$}
\end{picture}
\end{center}

For the trees of order $n$ with odd diameter $d=2k-1$, in \cite{Y.J.Z}, the authors conjectured that the tree with the minimum distance spectral radius is uniquely obtained at $\mathcal {T}_{1}$. In this article, we give an affirmative answer for this
conjecture.

\section{Preliminary}

\begin{lemma}{\bf \cite{A.I.D}} \label{le2,1}
Let $w$ be a vertex of a nontrivial connected graph $G$, and for
nonnegative integers $p$ and $q$, let $G(p,q)$ denote the graph
obtained from $G$ by attaching two pendant paths $P=wv_{1}v_{2}\cdots
v_{p}$ and $Q=wu_{1}u_{2}\cdots u_{q}$. If $p\geq q\geq 1$, then
$$\varrho(G(p+1,q-1))> \varrho(G(p,q)).$$
\end{lemma}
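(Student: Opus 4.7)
The plan is to apply the Rayleigh quotient method, using the distance Perron vector of $G(p,q)$ as a test vector for $D(G(p+1,q-1))$. Let $x$ be the distance Perron vector of $G(p,q)$, normalized as a unit vector, so $\varrho(G(p,q)) = x^{T} D(G(p,q)) x$ and $\varrho(G(p+1,q-1)) \geq x^{T} D(G(p+1,q-1)) x$ by the Rayleigh--Ritz principle. The crucial structural observation is that if one identifies the vertex $u_{q}$ of $G(p,q)$ with the vertex $v_{p+1}$ of $G(p+1,q-1)$---call this common vertex $z$---then the two graphs have the same vertex set, and every distance between a pair of vertices that does not involve $z$ is the same in both. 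Consequently the matrix $M := D(G(p+1,q-1)) - D(G(p,q))$ is symmetric and supported only on the row and column of $z$, and
$$x^{T} M x \;=\; 2 x_{z} \sum_{y \neq z} \Delta_{y} \, x_{y}, \qquad \Delta_{y} := d_{G(p+1,q-1)}(y,z) - d_{G(p,q)}(y,z).$$

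I would then compute the increments explicitly: $\Delta_{y} = p + 1 - q$ for every $y \in V(G)$, $\Delta_{u_{j}} = 2j + p + 1 - q$ for $1 \leq j \leq q - 1$, and $\Delta_{v_{i}} = p + 1 - q - 2i$ for $1 \leq i \leq p$. Under the hypothesis $p \geq q \geq 1$, the first two families are strictly positive; only the $\Delta_{v_{i}}$ with $i > (p + 1 - q)/2$ are negative. The whole problem therefore reduces to showing $\sum_{y \neq z} \Delta_{y} x_{y} > 0$, after which
$$\varrho(G(p+1,q-1)) \;\geq\; \varrho(G(p,q)) + 2 x_{z} \sum_{y \neq z} \Delta_{y} x_{y} \;>\; \varrho(G(p,q))$$
is immediate because $x_{z} > 0$.

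To defeat the negative $v_{i}$-contributions I would exploit two properties of the distance Perron vector on pendant paths. The first is monotonicity along each path, $x_{w} \leq x_{v_{1}} \leq \cdots \leq x_{v_{p}}$ and likewise $x_{w} \leq x_{u_{1}} \leq \cdots \leq x_{u_{q-1}}$; this can be derived by subtracting consecutive eigen-equations $\varrho\, x_{v_{i+1}} - \varrho\, x_{v_{i}} = \sum_{y} [d(y,v_{i+1}) - d(y,v_{i})] x_{y}$ and noting that the bracketed difference is $+1$ on the side of $v_{i}$ towards $w$ and $-1$ on the side away from $w$. The second is the cross-path comparison $x_{v_{j}} \geq x_{u_{j}}$ for $1 \leq j \leq q - 1$, which follows from $p \geq q$ via an analogous eigen-equation argument. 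With these in hand, I would pair the indices $j \in \{1, \ldots, q-1\}$ by grouping
$$\Delta_{u_{j}} x_{u_{j}} + \Delta_{v_{j}} x_{v_{j}} \;=\; (p+1-q+2j)\, x_{u_{j}} + (p+1-q-2j)\, x_{v_{j}},$$
and use the two inequalities above to bound it from below; the remaining $v_{i}$-terms with $i \geq q$ are then absorbed by the generous positive contribution $(p+1-q)\sum_{y \in V(G)} x_{y}$.

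The hardest step is the pairing estimate itself, since the most negative coefficients $\Delta_{v_{i}}$ are attached precisely to the vertices where the Perron entries $x_{v_{i}}$ are largest, so one cannot afford loose bounds. The decisive leverage comes from the fact that the positive coefficients $2j + p + 1 - q$ on the $u$-side grow with $j$, while Perron-vector monotonicity keeps $x_{u_{j}}$ comparable to $x_{v_{j}}$; combined with $p \geq q$, which forces the overall shortfall to be small, the net sum is strictly positive and the lemma follows.
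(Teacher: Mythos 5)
First, note that the paper does not prove this statement at all: Lemma~\ref{le2,1} is quoted from \cite{A.I.D} without proof, so there is no internal argument to compare yours against; your proposal has to stand on its own. Your setup is correct as far as it goes: identifying $u_q$ of $G(p,q)$ with $v_{p+1}$ of $G(p+1,q-1)$ does make $M=D(G(p+1,q-1))-D(G(p,q))$ supported on the row and column of $z$, and your increments $\Delta_y=p+1-q$ on $V(G)$, $\Delta_{u_j}=p+1-q+2j$, $\Delta_{v_i}=p+1-q-2i$ are all right. The fatal problem is the step you call a ``reduction'': the quantity $\sum_{y\neq z}\Delta_y x_y$ is \emph{not} positive in general, so the one-sided Rayleigh bound $\varrho(G(p+1,q-1))\geq\varrho(G(p,q))+2x_z\sum_{y\neq z}\Delta_y x_y$ is simply inconclusive. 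Concretely, take $G=K_2$ on $\{w,a\}$ and $p=q=2$, so $G(2,2)$ is the spider with legs $1,2,2$ at $w$. By symmetry $x_{u_i}=x_{v_i}$, and the test sum is $x_w+x_a+2x_{v_1}-3x_{v_2}$. Writing the eigen-equations one finds $\varrho(x_{v_2}-x_{v_1})=x_w+x_a+2x_{v_1}$, so the test sum equals $(\varrho-3)x_{v_2}-\varrho x_{v_1}$; numerically $\varrho\approx 10.74$ and $x_{v_2}/x_{v_1}\approx 1.375$, giving a strictly negative value (about $-0.1\,x_{v_1}$), even though $\varrho(G(3,1))\approx 11.0>\varrho(G(2,2))\approx 10.7$ so the lemma itself is true. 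The tail terms $v_q,\dots,v_p$ carry the most negative coefficients on the largest Perron entries, and the ``generous'' cushion $(p+1-q)\sum_{y\in V(G)}x_y$ is not generous when $p-q$ is small and $G$ is small.

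A secondary error: your cross-path comparison $x_{v_j}\geq x_{u_j}$ is stated in the wrong direction. At equal distance from the junction the \emph{shorter} path carries the larger Perron entry (this is exactly the content of $x_{u_{i-1}}<x_{v_i}$ in Lemma~\ref{le2,3} of this paper, where $u$ is the longer path), and one can check it directly on the example above. Ironically the true inequality $x_{u_j}\geq x_{v_j}$ is the one your pairing $\Delta_{u_j}x_{u_j}+\Delta_{v_j}x_{v_j}\geq 2(p+1-q)x_{v_j}$ would need; but even with it the unpaired terms sink the argument, as the counterexample shows. The upshot is that this lemma cannot be obtained from a single Rayleigh-quotient comparison using the Perron vector of $G(p,q)$; the proof in \cite{A.I.D} has to work directly with the eigenvalue equations along both pendant paths (in the spirit of the computations in Lemmas~\ref{le2,3} and~\ref{le2,4} here) rather than with a quadratic-form perturbation.
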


Denote by $G_{uv}$ the graph
obtained from graph $G$ by contracting the edge $uv$, that is, deleting the edge $uv$
and identifying the two vertices $u$ and $v$.

\unitlength 1mm \linethickness{0.4pt}
\begin{center}
\begin{picture}(100.67,28.137)
\put(8.33,12.00){\circle{14.00}} \put(34.67,12.00){\circle{14.00}}
\put(15.50,12.00){\circle*{1.33}} \put(27.67,12.00){\circle*{1.33}}
\put(16.00,12.00){\line(1,0){12.33}}
\put(4.33,12.00){\makebox(0,0)[cc]{$G_{1}$}}
\put(39.33,12.33){\makebox(0,0)[cc]{$G_{2}$}}
\put(17.00,10.00){\makebox(0,0)[cc]{$u$}}
\put(26.67,10.00){\makebox(0,0)[cc]{$v$}}
\put(21.00,3.00){\makebox(0,0)[cc]{$G$}}
\put(46.33,12.67){\line(1,0){10.33}}
\put(46.67,11.00){\line(1,0){10.00}}
\put(54.00,14.50){\line(4,-3){3.60}}
\put(57.67,12.00){\line(-4,-3){3.60}}
\put(70.00,12.00){\circle{14.00}} \put(84.33,12.00){\circle{14.00}}
\put(77.00,12.00){\circle*{1.33}}
\put(77.00,12.00){\line(0,1){10.33}}
\put(77.00,22.00){\circle*{1.33}}
\put(66.33,12.00){\makebox(0,0)[cc]{$G_{1}$}}
\put(88.67,12.33){\makebox(0,0)[cc]{$G_{2}$}}
\put(77.00,7.67){\makebox(0,0)[cc]{$u$}}
\put(78.67,23.67){\makebox(0,0)[cc]{$v$}}
\put(76.67,3.00){\makebox(0,0)[cc]{$G^{'}$}}
\put(45.67,-3.00){\makebox(0,0)[cc]{Fig. 2.1. $G$ and $G^{'}$}}
\end{picture}
\end{center}

\begin{lemma} \label{le2,2}{\bf \cite{Y.J.Z}} 
Suppose $uv$ is a cut-edge of connected graph $G$, but $uv$ is not a
pendant edge. Let $u$ denote the vertex obtained from identifying
$u$ and $v$ in $G_{uv}$, and $G^{'}=G_{uv}+uv$. Then
~$\varrho(G)>\varrho(G')$ (see Fig. 2.1).
\end{lemma}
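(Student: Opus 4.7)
My strategy is a Rayleigh-quotient comparison using the Perron vector of $G'$. Let $x$ denote the positive unit Perron vector of $D(G')$, so $\varrho(G')=x^{T}D(G')x$. Under the natural identification of $V(G)$ with $V(G')$ suggested by Fig.~2.1 (the merged vertex of $G'$ corresponds to $u\in V(G)$ and the new pendant of $G'$ corresponds to $v\in V(G)$), we may also regard $x$ as a unit vector on $V(G)$. Then $\varrho(G)\geq x^{T}D(G)x$, and it suffices to show $x^{T}(D(G)-D(G'))x>0$.

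The first step is to compare $D(G)$ and $D(G')$ entry-by-entry. Write $V_{1}=V(G_{1})\setminus\{u\}$ and $V_{2}=V(G_{2})\setminus\{v\}$; both are non-empty because $uv$ is not a pendant edge. A direct check shows that the only entries in which the two matrices differ are of two types: for $a\in V_{1}\cup\{u\}$ and $b\in V_{2}$, the distance decreases by $1$ when passing to $G'$ (the bridge is contracted); and for the new pendant $v$ and each $b\in V_{2}$, the distance increases by $1$. Substituting into the quadratic form yields
$$
x^{T}(D(G)-D(G'))x \;=\; 2\Bigl(\sum_{b\in V_{2}}x_{b}\Bigr)\!\left(\sum_{a\in V_{1}\cup\{u\}}x_{a}-x_{v}\right),
$$
so the goal reduces to showing that the second factor is strictly positive.

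For that I would exploit the Perron equations of $G'$ at $u$ and $v$. Since $v$ is a pendant of $G'$ adjacent to $u$, every shortest path from $v$ passes through $u$, giving $d_{G'}(v,c)=d_{G'}(u,c)+1$ for every $c\neq u,v$. Subtracting the two Perron equations then yields the closed-form relation
$$
(\varrho(G')+1)(x_{v}-x_{u}) \;=\; \sum_{a\in V_{1}}x_{a}+\sum_{b\in V_{2}}x_{b},
$$
which transforms the required inequality into $\varrho(G')\sum_{a\in V_{1}}x_{a}>\sum_{b\in V_{2}}x_{b}$. Now pick any $a_{0}\in V_{1}$ and write out the Perron equation at $a_{0}$; retaining only the terms indexed by $V_{2}$ and using $d_{G'}(a_{0},b)\geq 1$, one sees that $\varrho(G')x_{a_{0}}>\sum_{b\in V_{2}}x_{b}$, because the equation also contains strictly positive contributions from $u$, $v$, and $V_{1}\setminus\{a_{0}\}$. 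Since $\sum_{a\in V_{1}}x_{a}\geq x_{a_{0}}>0$, the desired estimate, and hence the lemma, follows.

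I expect the main delicate point to be the distance bookkeeping in the first step rather than any inequality: under the identification $V(G)=V(G')$, the vertex labeled $v$ plays two different structural roles (an endpoint of the cut-edge in $G$, a pendant in $G'$), and similarly $u$ is an ordinary vertex in $G$ but the merged vertex in $G'$. Once the two kinds of entry changes are correctly isolated and the quadratic form is recorded in the form above, the rest is a short Perron-equation manipulation.
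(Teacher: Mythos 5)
The paper offers no proof of this lemma at all --- it is imported verbatim from the cited thesis \cite{Y.J.Z} --- so there is nothing in-paper to compare against; your argument has to stand on its own, and it does. The distance bookkeeping is right: with $V_{1}=V(G_{1})\setminus\{u\}$ and $V_{2}=V(G_{2})\setminus\{v\}$ (both non-empty precisely because $uv$ is not pendant), the only entries that change are $d(a,b)$ for $a\in V_{1}\cup\{u\}$, $b\in V_{2}$ (down by $1$) and $d(v,b)$ for $b\in V_{2}$ (up by $1$), which gives exactly your factored quadratic form $2\bigl(\sum_{b\in V_{2}}x_{b}\bigr)\bigl(\sum_{a\in V_{1}\cup\{u\}}x_{a}-x_{v}\bigr)$. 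The pendant-vertex identity $(\varrho(G')+1)(x_{v}-x_{u})=\sum_{a\in V_{1}}x_{a}+\sum_{b\in V_{2}}x_{b}$ is correct, the reduction to $\varrho(G')\sum_{a\in V_{1}}x_{a}>\sum_{b\in V_{2}}x_{b}$ is an equivalence (multiply through by $\varrho(G')+1>0$), and the final estimate from the Perron equation at any $a_{0}\in V_{1}$ is strict because $d_{G'}(a_{0},b)\geq 1$ and the equation contains the additional positive term $d_{G'}(a_{0},u)x_{u}$. Combined with $\varrho(G)\geq x^{T}D(G)x$ for the unit vector $x$, this yields $\varrho(G)>\varrho(G')$. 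Methodologically your proof is in the same spirit as the comparisons the paper does carry out elsewhere (the $X^{T}D(\cdot)X$ estimates in the proof of Lemma 3.2), so it integrates naturally; the one point worth making explicit in a written version is the observation that no distance \emph{within} $G_{1}$ or \emph{within} $G_{2}$ changes, since in $G'$ the two parts meet only at the merged vertex.
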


\setlength{\unitlength}{0.5pt}
\begin{center}
\begin{picture}(609,137)
\qbezier(209,91)(209,71)(222,58)\qbezier(222,58)(235,45)(255,45)\qbezier(255,45)(274,45)(287,58)
\qbezier(287,58)(301,71)(301,91)
\qbezier(209,91)(209,110)(222,123)\qbezier(222,123)(235,137)(255,137)\qbezier(255,137)(274,137)(287,123)
\qbezier(287,123)(301,110)(301,91)
\put(19,46){\circle*{5}}
\put(593,46){\circle*{5}}
\qbezier(19,46)(306,46)(593,46)
\put(336,46){\circle*{5}}
\put(56,46){\circle*{5}}
\put(512,47){\circle*{5}}
\put(217,47){\circle*{5}}
\put(255,46){\circle*{5}}
\put(299,47){\circle*{5}}
\put(553,46){\circle*{5}}
\put(92,46){\circle*{5}}
\put(249,104){$G^{'}$}
\put(248,30){$v_{1}$}
\put(212,30){$v_{2}$}
\put(-5,45){$v_{k}$}
\put(36,30){$v_{k-1}$}
\put(80,30){$v_{k-2}$}
\put(292,30){$u_{1}$}
\put(329,30){$u_{2}$}
\put(600,45){$u_{k}$}
\put(544,30){$u_{k-1}$}
\put(492,30){$u_{k-2}$}
\put(200,-9){Fig. 2.2. $G$}
\end{picture}
\end{center}

\begin{lemma}\label{le2,3} 
Let $G$ be a graph obtained by attaching two pendant paths of length $k-1$ and $k$ $(k\geq 2)$ respectively to a vertex of a nontrivial graph $G^{'}$ (see Fig. 2.2). Denote by $X=\{x_{u_{1}}$,  $x_{u_{2}}$, $\ldots$,  $x_{u_{k}}$,  $x_{v_{1}}$,  $x_{v_{2}}$, $\ldots$,  $x_{v_{k}}$, $x_{u}$, $x_{v}$, $\ldots\}^{T}$ the distance Perron vector of $G$, where $x_{w}$ corresponds to vertex $w$. Then we have
$x_{v_{1}}<x_{u_{1}}$ and $x_{u_{i-1}}<x_{v_{i}}<x_{u_{i}}$ for $i=2$, $3$, $\ldots$, $k$.

\end{lemma}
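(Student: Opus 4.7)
The plan is to derive the chain $x_{v_1}<x_{u_1}<x_{v_2}<x_{u_2}<\cdots<x_{v_k}<x_{u_k}$ by combining two ingredients. First, the identity
$$(\varrho(G)+d(a,b))(x_a-x_b)=\sum_{w\neq a,b}\bigl(d(a,w)-d(b,w)\bigr)x_w,$$
obtained by subtracting the eigenvalue equation $\varrho(G)x_a=\sum_w d(a,w)x_w$ at two vertices $a,b$; second, the three-term linear recurrence satisfied by the Perron vector along each pendant path. Throughout, write $\lambda=\varrho(G)$, $c=x_{u_1}-x_{v_1}$, $b_j=x_{u_j}-x_{v_j}$ for $j=2,\ldots,k$, $a_i=x_{v_{i+1}}-x_{u_i}$ for $i=1,\ldots,k-1$, and $S_0=\sum_{y\in V(G')\setminus\{v_1\}}x_y>0$ (positive because the Perron vector is positive and $G'$ is nontrivial).

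I apply the basic identity to three families of pairs: $(v_1,u_1)$, $(u_j,v_j)$ for $j\geq 2$, and $(u_i,v_{i+1})$ for $i=1,\ldots,k-1$. In each case the two vertices in the pair are either equidistant from, or at a constant offset from, every $y\in V(G')\setminus\{v_1\}$; so such $y$ contribute either $0$ or a uniform multiple of $S_0$ to the right-hand side. The identities reduce to explicit linear relations in $c,b_j,a_i,x_{u_\ell},S_0$, including the key ones $(\lambda+1)c+\sum_{j=2}^{k}b_j=S_0$ (from $(v_1,u_1)$) and $(\lambda+2i)a_i+2\sum_{l=1}^{i-1}l\,a_l=2i\,x_{u_{i+1}}+2i\sum_{l=i+2}^{k}b_l$ (from $(u_i,v_{i+1})$).

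Taking second differences of the eigenvalue equation along each pendant path gives the recurrence $x_{w_{\ell+1}}-\alpha\,x_{w_\ell}+x_{w_{\ell-1}}=0$ with $\alpha=2+2/\lambda$, so both $(b_j)$ and $(a_i)$ satisfy the same recurrence on their interior indices. Comparing the eigenvalue equations at the two pendant ends $u_k,v_k$ yields the clean boundary $(\lambda+2)b_k=\lambda b_{k-1}$. Writing the general solution $b_j=At^j+Bt^{-j}$, where $t>1$ is the larger root of $t^2-\alpha t+1=0$, the boundary condition gives $B/A=t^{2k-1}\cdot\frac{(\lambda+2)t-\lambda}{\lambda t-(\lambda+2)}$; using $t+1/t=\alpha$ one checks both factors are positive (the nontrivial one, $\lambda t>\lambda+2$, reduces to $t>1+2/\lambda$, which follows from $t+1/t=\alpha$). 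Thus $A,B$ share a sign, and substituting into $(\lambda+1)c+\sum_j b_j=S_0>0$ forces $A,B>0$, whence $c>0$ and $b_j>0$ for every $j$. This establishes $x_{v_1}<x_{u_1}$ and $x_{v_j}<x_{u_j}$ for $j=2,\ldots,k$.

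For the remaining inequalities $x_{u_{i-1}}<x_{v_i}$: at $i=1$ the above identity for $a_i$ has empty subtractive sum, so $a_1>0$ is immediate. For $i\geq 2$, $(a_i)$ takes the form $a_i=Ct^i+Dt^{-i}$; the two boundary conditions (at $i=1$ and $i=k-1$, the latter involving the positive quantity $x_{u_k}$) determine $C,D$, and a sign analysis parallel to the $b$-case forces $C,D>0$. The main technical obstacle I anticipate is this last positivity bootstrap for the $a_i$'s, which reduces to verifying the sign of a small determinant arising from the boundary relations; the cleanest route follows the same recurrence-plus-boundary template used for the $b_j$'s.
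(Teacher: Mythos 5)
Your treatment of the differences $b_j=x_{u_j}-x_{v_j}$ (with $b_1=c$) is correct and genuinely different from the paper's. The paper proves $b_k>0$ by contradiction: if $b_k\le 0$, the telescoped relation $\varrho(b_j-b_{j+1})=-2\sum_{l>j}b_l$ propagates $b_j\le 0$ backward to $j=1$, contradicting $\varrho(x_{u_1}-x_{v_1})=\sum_{w\in V(G)\setminus\{u_1,\dots,u_k\}}x_w-\sum_{i=1}^k x_{u_i}>0$, and the same propagation then gives all $b_j>0$. Your route --- solving the three-term recurrence explicitly as $At^j+Bt^{-j}$, using the pendant-end relation $(\lambda+2)b_k=\lambda b_{k-1}$ to show $A$ and $B$ share a sign (I checked that $\lambda t>\lambda+2$ does follow from $t+1/t=2+2/\lambda$ and $t>1$), and using $(\lambda+1)c+\sum_{j\ge 2}b_j=S_0>0$ to fix that common sign --- is a valid alternative that yields more structural information about the sequence, at the cost of more computation than the paper's one-line contradiction.

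The second half, however, has a genuine gap. Your identity $(\lambda+2i)a_i+2\sum_{l=1}^{i-1}l\,a_l=2i\,x_{u_{i+1}}+2i\sum_{l= i+2}^{k}b_l$ is correct and does give $a_1>0$, but the ``recurrence-plus-boundary template'' does not transfer to the $a_i$'s: the two ends $u_k,v_k$ of the $b$-sequence are both pendant and pair into the clean two-term relation $(\lambda+2)b_k=\lambda b_{k-1}$, whereas the far end of the $a$-sequence is $a_{k-1}=x_{v_k}-x_{u_{k-1}}$ with $u_{k-1}$ an interior vertex, so no analogous two-term boundary relation exists there. The only available constraint is your identity at $i=k-1$, which involves every $a_1,\dots,a_{k-2}$; after substituting $a_i=Ct^i+Dt^{-i}$ this becomes a relation whose determinant sign is precisely what you admit you have not verified, so the positivity of $C,D$ is not established. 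The fix is much simpler than the route you sketch and requires none of the recurrence machinery: subtracting your identity at $i-1$ from the one at $i$ collapses to $(\lambda+2)a_i-\lambda a_{i-1}=2x_{u_{i+1}}+2\sum_{l= i+2}^{k}b_l>0$ (using $b_l>0$ and positivity of the Perron vector), and forward induction from $a_1>0$ finishes. Up to normalization this is exactly the paper's argument, which computes $\varrho(a_i-a_{i-1})=2\sum_{j= i}^{k}x_{u_j}-2\sum_{j= i+1}^{k}x_{v_j}>0$ directly and inducts upward from $\varrho a_1=2\sum_{i=1}^{k}x_{u_i}-2\sum_{i=2}^{k}x_{v_i}>0$.
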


\begin{proof}
We first prove $x_{v_{k}}<x_{u_{k}}$. Otherwise, suppose $x_{v_{k}}\geq x_{u_{k}}$. Note that for $i=1$, $2$, $\ldots$,
$k-1$,
$$\varrho(G)(x_{v_{i}}- x_{u_{i}})-\varrho(G)(x_{v_{i+1}}- x_{u_{i+1}})=2\sum^{k}_{j=i+1}(x_{v_{j}}- x_{u_{j}}).$$
Then $$\varrho(G)(x_{v_{k-1}}- x_{u_{k-1}})-\varrho(G)(x_{v_{k}}- x_{u_{k}})=2(x_{v_{k}}- x_{u_{k}})\geq 0.$$
Noting that $\varrho(G)>0$ and $x_{v_{k}}\geq x_{u_{k}}$, we get $x_{v_{k-1}}\geq x_{u_{k-1}}$.
By induction, we get $x_{v_{i}}\geq x_{u_{i}}$ for $i=1$, $2$, $\ldots$,
$k$. Let $V_{0}=\{u_{1}$, $u_{2}$, $\ldots$, $u_{k}\}$. We find that
$$\varrho(G)(x_{u_{1}}- x_{v_{1}})=\sum_{w\in V(G)\backslash V_{0}} x_{w}-\sum^{k}_{i=1}x_{u_{i}}> 0,$$ which contradicts $x_{v_{1}}\geq x_{u_{1}}$. As a result, we have $x_{v_{k}}<x_{u_{k}}$. As above proof, by induction, we can prove that $x_{v_{i}}<x_{u_{i}}$ for $i=1$, $2$, $\ldots$,
$k$.

Note that $$\displaystyle\varrho(G)(x_{v_{2}}- x_{u_{1}})=2\sum^{k}_{i=1}x_{u_{i}}-2\sum^{k}_{i=2}x_{v_{i}}>0,$$ and note that for $2\leq i\leq k$, $$\displaystyle\varrho(G)(x_{v_{i+1}}- x_{u_{i}})-\varrho(G)(x_{v_{i}}- x_{u_{i-1}})=2\sum^{k}_{j=i}x_{u_{j}}-2\sum^{k}_{j=i+1}x_{v_{j}}>0.$$ By induction, we have
$x_{u_{i-1}}<x_{v_{i}}$ for $i=2$, $3$, $\ldots$,
$k$.

Finally, noting that $x_{v_{1}}<x_{u_{1}}$, we get $$\varrho(G)(x_{u_{2}}- x_{u_{1}})-\varrho(G)(x_{u_{1}}- x_{v_{1}})=2x_{u_{1}}> 0,$$ and then $x_{u_{2}}> x_{u_{1}}$. Note that for $i=2$, $3$, $\ldots$,
$k$, $$\varrho(G)(x_{u_{i+1}}- x_{u_{i}})-\varrho(G)(x_{u_{i}}- x_{u_{i-1}})=2x_{u_{i}}> 0.$$ As above proof, by induction, we can prove that
$x_{u_{i-1}}<x_{u_{i}}$ for $i=2$, $3$, $\ldots$, $k$. This completes the proof. \ \ \ \ \ $\Box$

\end{proof}

\begin{lemma}\label{le2,4} 
Denote by $X=\{x_{u_{1}}$,  $x_{u_{2}}$, $\ldots$,  $x_{u_{k}}$,  $x_{v_{1}}$,  $x_{v_{2}}$, $\ldots$,  $x_{v_{k}}$, $x_{v_{2k+1}}$, $x_{v_{2k+2}}$, $\ldots$, $x_{v_{n}}\}^{T}$ the distance Perron vector of $\mathcal {T}_{1}$ (see Fig. 1.1), where $x_{w}$ corresponds to vertex $w$. If $n\geq 2k+1$, then we have
$x_{v_{2k+1}}=x_{v_{2k+2}}=\cdots =x_{v_{n}}$ and $\displaystyle x_{v_{1}}+\lceil\frac{n-2k}{2}\rceil x_{v_{n}}>x_{u_{k}}.$
\end{lemma}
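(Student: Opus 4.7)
My plan is to handle the two assertions separately.

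\textbf{For the equality of pendant entries}, I would observe that any two pendants $v_j, v_{j'}$ with $j, j' \in \{2k+1,\dots,n\}$ have identical distances to every vertex outside $\{v_j, v_{j'}\}$, while $d(v_j, v_{j'}) = 2$. Subtracting their eigenvalue equations leaves only the pendant--pendant contribution, giving $\varrho(x_{v_j} - x_{v_{j'}}) = -2(x_{v_j} - x_{v_{j'}})$, so $(\varrho+2)(x_{v_j}-x_{v_{j'}})=0$ and hence $x_{v_j}=x_{v_{j'}}$. Denote the common value by $t$.

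\textbf{For the strict inequality}, set $m=n-2k$ and $\mu=\lceil m/2\rceil$. The key step is to evaluate
\[
\varrho\bigl(x_{v_1} + \mu\, t - x_{u_k}\bigr)
\]
by substituting the eigenvalue equations at $v_1$, at a single pendant (multiplied by $\mu$), and at $u_k$. Reading off the coefficient of $x_w$ from the distance combination $d(v_1,w)+\mu\, d(v_{2k+1},w)-d(u_k,w)$ orbit by orbit gives the explicit expression
\[
(\mu-k)\,x_{v_1} + \sum_{i=2}^{k}(\mu i-k)\,x_{v_i} + \sum_{i=1}^{k}\bigl[(2+\mu)i+\mu-k\bigr]\,x_{u_i} + \bigl[m(2\mu-k)-2\mu\bigr]\,t,
\]
which I want to show is strictly positive. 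The plan is to accomplish this in three steps: (i) apply Lemma~\ref{le2,3} to $\mathcal{T}_1$, taking $G^{'}$ to be the star on $v_1$ with the $m$ pendants as leaves, to get $x_{v_1}<x_{u_1}<x_{u_2}<\cdots<x_{u_k}$ and $x_{v_i}<x_{u_{i-1}}$ for $2\le i\le k$, hence in particular $x_{v_i}<x_{u_i}$ for every $i$; (ii) for each index $i$ with $\mu i<k$, in which the $x_{v_i}$-coefficient is negative, use $x_{v_i}<x_{u_i}$ to replace the $x_{v_i}$-term by the $x_{u_i}$-term (the substitution strictly \emph{increases} the expression), thereby consolidating everything into a combination in the $x_{u_i}$'s and $t$ alone; (iii) derive the pendant identity $(\varrho+2)t = W + \varrho x_{v_1}$ (where $W=\sum_w x_w$) by subtracting the pendant and $v_1$ equations, and use it to bound $t$ against the $x_{u_i}$'s. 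The dominant coefficient $k(1+\mu)+\mu$ on $x_{u_k}$, combined with the monotonicity $x_{u_1}<\cdots<x_{u_k}$, should then suffice to make the total strictly positive.

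\textbf{Main obstacle.} The coefficient $m(2\mu-k)-2\mu$ of $t$ is negative whenever $2\mu<k+1$, i.e.\ in the regime of small $m$, and cannot be handled by coefficient inspection alone. To absorb this negative $t$-contribution one has to combine the strict inequalities $x_{v_i}<x_{u_i}$ from Lemma~\ref{le2,3} with a quantitative lower bound on $\varrho$ (for example $\varrho\ge k^2+m$ from the minimum row sum of $D(\mathcal{T}_1)$), so that the consolidated $x_{u_k}$-term strictly dominates. The parity-sensitive gap introduced by the ceiling, in which $2\mu=m+1$ when $m$ is odd, is exactly what must be absorbed by the \emph{strict} inequality $x_{v_i}<x_{u_i}$, so a brief case split on the parity of $m$ may be unavoidable.
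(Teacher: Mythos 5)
Your first assertion (equality of the pendant coordinates) is fine, and your setup for the second assertion is sound: the linear combination $d(v_1,w)+\mu\,d(v_{2k+1},w)-d(u_k,w)$ is the right object, and I checked that your coefficients $(\mu i-k)$, $((2+\mu)i+\mu-k)$ and $(m(2\mu-k)-2\mu)$ are all computed correctly. (Minor slip: Lemma~\ref{le2,3} gives $x_{u_{i-1}}<x_{v_i}$, not $x_{v_i}<x_{u_{i-1}}$ as you wrote; this does not affect the only consequence you draw, namely $x_{v_i}<x_{u_i}$.)

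However, the proof is not complete, and the part you label the ``main obstacle'' is precisely the content of the lemma. In the regime of small $m$ the coefficient of $t$ is negative of order $-mk$, and you offer only a direction (a numerical lower bound on $\varrho$ plus a parity split) without carrying it out; it is not evident that $\varrho\ge k^2+m$ together with $(\varrho+2)t=W+\varrho x_{v_1}$ yields a bound on $t$ sharp enough to be dominated by $k(1+\mu)x_{u_k}$, since the smaller $x_{u_i}$ (which carry negative coefficients after your consolidation step) are not controlled from below relative to $x_{u_k}$. The paper closes exactly this gap by two additional ingredients that your plan lacks: first, it proves the auxiliary inequality $x_{v_{2k+1}}\le x_{v_k}$ (by a short contradiction argument resting on Lemma~\ref{le2,3}), which converts the troublesome $t$-terms into terms of the form $(n-2k)k\,x_{v_k}$ that can be absorbed by the path coordinates; second, it exploits the monotonicity $x_{u_{i-1}}<x_{u_i}$ through the symmetric pairing $\sum_{i=1}^{\lfloor (k-1)/2\rfloor}\bigl((k-2i+1)x_{u_{k-i}}-(k-2i-1)x_{u_i}\bigr)>0$ to handle the residual negative coefficients on the small-index $x_{u_i}$. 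Until you either prove $t\le x_{v_k}$ (or an equivalent bound) and organize the $x_{u_i}$-coefficients into such nonnegative pairs, the positivity claim remains unestablished.
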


\begin{proof}
By symmetry, we have $x_{v_{2k+1}}=x_{v_{2k+2}}=\cdots =x_{v_{n}}$. We claim that $x_{v_{k}}\geq x_{v_{2k+1}}$. Otherwise, suppose $x_{v_{k}}< x_{v_{2k+1}}$. Then
$$\varrho(G)(x_{v_{1}}- x_{v_{k-1}})-\varrho(G)(x_{v_{2k+1}}- x_{v_{k}})=2(x_{v_{2k+1}}- x_{v_{k}})> 0.$$ By Lemma \ref{le2,3}, we know that it is impossible. Then our claim holds. In particular, if $k>2$, we can prove $x_{v_{k}}> x_{v_{2k+1}}$ similarly.

Noting that $x_{v_{i}}<x_{u_{i}}$ for $i=1$, $2$, $\ldots$, $k$ by Lemma \ref{le2,3}, we have $$\varrho(G)x_{v_{1}}=\sum^{k}_{i=1}(i-1)x_{v_{i}}+\sum^{n}_{i=2k+1}x_{v_{i}}+\sum^{k}_{i=1}ix_{u_{i}}\ \ \ \ \ \ \ \ \ \ \ \ \ \ \ \ \ \ $$
$$=\sum^{k}_{i=1}((i-1)x_{v_{i}}+ix_{u_{i}})+\sum^{n}_{i=2k+1}x_{v_{i}}\ \ \ \ \ \ \ \ \ $$
$$>\sum^{k}_{i=1}(2i-1)x_{v_{i}}+(n-2k)x_{v_{n}}\ \ \ \ \ \ \ \ \ \ \ \ \ \ \ $$
$$\ \ \ =\sum^{k}_{i=1}(k+i-1-(k-i))x_{v_{i}}+(n-2k)x_{v_{n}}.\ \  $$

Noting that $x_{v_{k}}\geq x_{v_{2k+1}}$ and $x_{u_{k}}\geq x_{v_{k}}$, we have
$$\varrho(G)\sum^{2k+\lceil\frac{n-2k}{2}\rceil}_{i=2k+1}x_{v_{i}}=
\lceil\frac{n-2k}{2}\rceil(\sum^{k}_{i=1}ix_{v_{i}}+\sum^{k}_{i=1}(i+1)x_{u_{i}}+2(n-2k-1)x_{v_{n}})\ \ \ \ \ \ \ \ $$
$$=\lceil\frac{n-2k}{2}\rceil kx_{v_{k}}+\lceil\frac{n-2k}{2}\rceil(k+1)x_{u_{k}}\ \ \ \ \ \ \ \ $$
$$\ \ \ \ \ \ \ \ \ \ \ \ \ \ \ \ \ \ \ \ \ +\lceil\frac{n-2k}{2}\rceil(\sum^{k-1}_{i=1}ix_{v_{i}}+\sum^{k-1}_{i=1}(i+1)x_{u_{i}}+2(n-2k-1)x_{v_{n}})$$
$$\ \ \ >(n-2k)kx_{v_{k}}+\lceil\frac{n-2k}{2}\rceil x_{u_{k}}\ \ \ \ \ \ \ \ \ \ \ \ \ \ \ \ \ \ \ \ \ $$ $$\ \ \ \ \ \ \ \ \ \ \ \ \ \ \ \ \ \ \ \ \ \ +\sum^{k-1}_{i=1}ix_{v_{i}}+\lceil\frac{n-2k}{2}\rceil(\sum^{k-1}_{i=1}(i+1)x_{u_{i}}+2(n-2k-1)x_{v_{n}}).$$

Noting that $x_{v_{k}}> x_{v_{2k+1}}$, $x_{v_{i-1}}<x_{v_{i}}$ for $i=2$, $3$, $\ldots$, $k$, and $\displaystyle \sum^{k-1}_{i=1}ix_{v_{i}}=\sum^{k-1}_{i=1}\sum^{k-i}_{j=1}x_{v_{k-j}}$, we have
$$\varrho(G)(x_{v_{1}})+\varrho(G)\sum^{2k+\lceil\frac{n-2k}{2}\rceil}_{i=2k+1}x_{v_{i}}\ \ \ \ \ \ \ \ \ \ \ \ \ \ \ \ \ \ \ \ \ \ \ \ \ \ \ \ \ \ \ \ \ \ \ \ \ \ \ \ \ \ \ \ \ \ \ \ \ \ \ \ \ \ \ \ \ \ \ \ \ \ \ \ \ \ \ \ \ \ $$
$$>\sum^{k}_{i=1}((k+i-1-(k-i))x_{v_{i}}+\sum^{k-i}_{j=1}x_{v_{k-j}})+(n-2k)(k+1)x_{v_{k}}+\lceil\frac{n-2k}{2}\rceil x_{u_{k}}\ \ \ \ $$
$$ +\lceil\frac{n-2k}{2}\rceil(\sum^{k-1}_{i=1}(i+1)x_{u_{i}}+2(n-2k-1)x_{v_{n}})\ \ \ \ \ \ \ \ \ \ \ \ \ \ \ \ \ \ \ \ \ \ \ \ \ \ \ \ \ \ \ \ \ \ \ \ \ \ \ \ \ \ $$
$$>\sum^{k}_{i=1}(k+i-1)x_{v_{i}}+(n-2k)(k+1)x_{v_{k}}+\lceil\frac{n-2k}{2}\rceil x_{u_{k}}\ \ \ \ \ \ \ \ \ \ \ \ \ \ \ \ \ \ \ \ \ \ \ \ \ \ \ \ \ \ \ \ \ \ $$
$$ +\lceil\frac{n-2k}{2}\rceil(\sum^{k-1}_{i=1}(i+1)x_{u_{i}}+2(n-2k-1)x_{v_{n}}).\ \ \ \ \ \ \ \ \ \ \ \ \ \ \ \ \ \ \ \ \ \ \ \ \ \ \ \ \ \ \ \ \ \ \ \ \ \ \ \ \ \ $$

Noting that $\displaystyle \varrho(G)x_{u_{k}}=\sum^{k}_{i=1}(k+i-1)x_{v_{i}}+(n-2k)(k+1)x_{v_{n}}+\sum^{k}_{i=1}(k-i)x_{u_{i}}$ and $x_{u_{i-1}}<x_{u_{i}}$ for $i=2$, $3$, $\ldots$, $k$,
we have $$\varrho(G)(x_{v_{1}})+\varrho(G)\sum^{2k+\lceil\frac{n-2k}{2}\rceil}_{i=2k+1}x_{v_{i}}-\varrho(G)x_{u_{k}}\ \ \ \ \ \ \ \ \ \ \ \ \ \ \ \ \ \ \ \ \ \ \ \ \ \ \ \ \ \ \ \ \ \ \ \ \ \ \ \ \ \ \ \ \ \ \ \ \ \ \ \ \ \ \ \ \ \ \ \ \ \ \ \ \ $$
$$\ \ \ \ \ \ \ >\lceil\frac{n-2k}{2}\rceil x_{u_{k}}+\lceil\frac{n-2k}{2}\rceil(\sum^{k-1}_{i=1}(i+1)x_{u_{i}}+2(n-2k-1)x_{v_{n}})-\sum^{k}_{i=1}(k-i)x_{u_{i}}\ \ \ \ \ \ \ \ \ \ \ \ \ \ \ \ \ \ \ \ \ \ \ $$
$$>\lceil\frac{n-2k}{2}\rceil x_{u_{k}}+\sum^{\lfloor\frac{k-1}{2}\rfloor}_{i=1}((k-2i+1)x_{u_{k-i}}-(k-2i-1)x_{u_{i}}))>0.\ \ \ \ \ \ \ \ \ \ \ \ \ \ \ \ \ \ \ \ \ $$ Then $$x_{v_{1}}+\sum^{2k+\lceil\frac{n-2k}{2}\rceil}_{i=2k+1}x_{v_{i}}>x_{u_{k}}$$ follows.
This completes the proof. \ \ \ \ \ $\Box$

\end{proof}

\section{Extremal graphs}

\setlength{\unitlength}{0.6pt}
\begin{center}
\begin{picture}(522,125)
\qbezier(81,59)(10,98)(2,60) \qbezier(2,60)(0,4)(81,59)
\qbezier(81,59)(159,99)(161,59) \qbezier(81,59)(159,17)(161,59)
\qbezier(81,59)(117,127)(72,126) \put(75,44){$v_{0}$}
\put(16,56){$G_{1}$} \put(133,54){$G_{2}$} \put(70,104){$G_{3}$}
\put(77,17){$G$} \qbezier(72,126)(32,116)(81,59)
\qbezier(299,60)(306,103)(380,59) \qbezier(299,60)(304,12)(380,59)
\qbezier(380,59)(461,115)(472,61) \qbezier(380,59)(465,7)(472,61)
\qbezier(380,59)(416,59)(452,59) \qbezier(452,59)(442,139)(484,125)
\qbezier(452,59)(522,102)(484,125) \put(374,45){$v_{0}$}
\put(445,47){$v_{a}$} \put(412,64){$P_{a,0}$} \put(311,56){$G_{1}$}
\put(472,40){$G_{2}$} \put(468,105){$G_{3}$} \put(380,17){$H$}
\qbezier(194,67)(221,67)(249,67) \qbezier(194,55)(221,55)(249,55)
\qbezier(239,74)(249,68)(260,61) \qbezier(260,61)(250,55)(240,48)
\put(453,59){\circle*{4}} \put(380,59){\circle*{4}}
\put(80,59){\circle*{4}} \put(170,-11){Fig. 3.1. $G, H$}
\end{picture}
\end{center}

\begin{lemma}{\bf \cite{YJZS}} \label{le3,1} 
Suppose graph $G=\bigcup^{3}_{i=1} G_{i}$ satisfies
that $G_{i}\cap G_{j}=v_{0}$ for $1\leq i, j\leq 3$, $i\neq j$,
and that $|V(G_{i})|\geq 2$ for $i=1, 2, 3$ (see\ Fig.\ 3.1).
$X=(x_{0}$, $x_{1}$, $\cdots$, $x_{n-1})^{T}$ is the Perron
eigenvector corresponding to $\varrho(G)$, in which $x_{i}$
corresponds to $v_{i}$. Let $\displaystyle S_{1}=\sum_{v_{i}\in
V(G_{1})}x_{i}, \displaystyle S_{2}=\sum_{v_{i}\in V(G_{2})}x_{i}$,
and for a vertex $v_{a}\in V(G_{2}), v_{a}\neq v_{0}$, let graph
$\displaystyle H=G-\sum_{v_{i}\in
N_{G_{3}}(v_{0})}v_{0}v_{i}+\sum_{v_{i}\in
N_{G_{3}}(v_{0})}v_{a}v_{i}.$ If $S_{1}\geq S_{2}$, then
$\varrho(H)> \varrho(G)$.

\end{lemma}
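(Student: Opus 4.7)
The plan is to apply the Rayleigh-quotient trick using the distance Perron vector $X$ of $G$. Since $D(H)$ is a symmetric nonnegative matrix, one has $\varrho(H) \ge X^{T} D(H) X / X^{T} X$, while by construction $\varrho(G) = X^{T} D(G) X / X^{T} X$. It therefore suffices to prove the strict inequality
\[
X^{T}\bigl(D(H) - D(G)\bigr) X > 0.
\]

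The first main step is a careful accounting of which distances change when the branch $G_3$ is detached from $v_0$ and reattached at $v_a \in V(G_2)$. Set $C := V(G_3) \setminus \{v_0\}$. Because $G_1, G_2, G_3$ share only $v_0$, every pair of vertices lying inside a single $V(G_i)$ and every pair with one endpoint in $V(G_1)$ and the other in $V(G_2)$ has unchanged distance in $H$. The only altered pairs are those with exactly one endpoint in $C$, and a brief path analysis (every $V(G_1)$-to-$C$ path now additionally traverses the $v_0$--$v_a$ path in $G_2$) yields: for $v_i \in V(G_1)$ and $v_j \in C$, $d_H(v_i,v_j) - d_G(v_i,v_j) = d_G(v_0, v_a)$; and for $v_i \in V(G_2) \setminus \{v_0\}$ and $v_j \in C$, $d_H(v_i,v_j) - d_G(v_i,v_j) = d_G(v_i, v_a) - d_G(v_i, v_0)$.

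Substituting these two identities into $X^{T}(D(H) - D(G))X$ (with the factor of $2$ coming from symmetry) and factoring out the positive sum $T := \sum_{v_j \in C} x_j > 0$, the required inequality reduces to
\[
d_G(v_0, v_a)\, S_1 + \sum_{v_i \in V(G_2) \setminus \{v_0\}} \bigl(d_G(v_i, v_a) - d_G(v_i, v_0)\bigr) x_i > 0.
\]
By the triangle inequality each summand in the right-hand term is bounded below by $-d_G(v_0, v_a)\, x_i$, so the left-hand side is at least $d_G(v_0, v_a)\bigl(S_1 - (S_2 - x_0)\bigr) = d_G(v_0, v_a)\bigl(S_1 - S_2 + x_0\bigr)$. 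The hypothesis $S_1 \ge S_2$, the Perron positivity $x_0 > 0$, and $d_G(v_0, v_a) \ge 1$ (since $v_a \ne v_0$) then make this quantity strictly positive, whence $\varrho(H) > \varrho(G)$.

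The step I expect to need the most care is the distance bookkeeping: one must partition the vertex set so as not to double-count $v_0$ (which lies in $V(G_1) \cap V(G_2) \cap V(G_3)$) and keep track of the factor of $2$ from symmetry of $D(G)$ and $D(H)$. Once that is set up cleanly, the remaining estimates are routine consequences of the triangle inequality and the assumption $S_1 \ge S_2$; no induction or spectral identity beyond the Rayleigh quotient is needed.
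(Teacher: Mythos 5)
The paper does not prove Lemma \ref{le3,1}; it is quoted from \cite{YJZS}, where the argument is precisely the Rayleigh-quotient comparison you use. Your proof is correct: the partition of the changed pairs (exactly those with one endpoint in $V(G_3)\setminus\{v_0\}$, with $v_0$ counted once via $S_1$), the two distance-change identities, and the triangle-inequality bound $d_G(v_i,v_a)-d_G(v_i,v_0)\geq -d_G(v_0,v_a)$ leading to $d_G(v_0,v_a)(S_1-S_2+x_0)>0$ all check out, so no issues.
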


\begin{lemma}\label{le3,2} 
Of all the trees with $n$ vertices and diameter $d=2k-1$ $(k\geq 2)$, the
minimal distance spectral radius is obtained uniquely at $\mathcal {T}_{1}$.

\end{lemma}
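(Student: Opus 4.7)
Let $T$ be an extremal tree of order $n$ and diameter $d=2k-1$; when $n=2k$ we have $T=P_{2k}=\mathcal{T}_1$, so suppose $n>2k$ and fix a diametral path $P\colon v_kv_{k-1}\cdots v_1u_1\cdots u_k$ in $T$. My plan is to prove $T\cong\mathcal{T}_1$ in three successive stages.

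\emph{Stage 1 (caterpillar).} Every vertex of $T$ off $P$ must be a pendant neighbour of some $P$-vertex. Otherwise $T$ has a non-pendant cut-edge $uv$ lying off $P$; applying Lemma \ref{le2,2} to it yields $T'=T_{uv}+uv$ with $\varrho(T')<\varrho(T)$. Because $uv$ avoids the diametral path, $P$ remains intact in $T'$ and the newly appended pendant creates no longer path, so $T'$ still has diameter $2k-1$, contradicting extremality.

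\emph{Stage 2 (single attachment).} Suppose the pendants of $T$ are attached at two distinct path-vertices $w_1,w_2$. Let $T''$ denote the tree obtained from $T$ by relocating every pendant of $w_2$ to $w_1$ (with $w_1$ chosen so that the "opposite" side of $w_1$ is the heavier one). Applying Lemma \ref{le3,1} to $G=T''$ with $v_0=w_1$, with $G_3$ the pendant edges to be moved back to $w_2$, with $v_a=w_2$, and with $G_2$ the side of $w_1$ containing $w_2$, the Perron ordering on the two halves of $P$ at $w_1$ (from Lemma \ref{le2,3} applied to the two pendant paths emanating from $w_1$) gives $S_1\ge S_2$ in $T''$, so Lemma \ref{le3,1} yields $\varrho(T)>\varrho(T'')$, contradicting extremality. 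Therefore all non-path pendants of $T$ lie at a single vertex $w\in V(P)$.

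\emph{Stage 3 (centring).} Now $T$ is $P$ together with a star of $n-2k$ pendants at $w=v_j$ (the case $w=u_j$ is handled by the swap $v_i\leftrightarrow u_i$). If $j\ge 2$, let $T''$ be the tree with the star at $v_{j-1}$. Apply Lemma \ref{le3,1} to $G=T''$ with $v_0=v_{j-1}$, with $G_3$ the whole pendant star, with $v_a=v_j$, with $G_2$ the $v$-arm, and with $G_1$ the $u$-arm; the two arms have lengths $k-j+1$ and $j+k-2$ respectively, and Lemma \ref{le2,3} (used with $G'$ equal to the star plus $v_{j-1}$) orders the Perron entries along these arms so that $S_1\ge S_2$ in $T''$. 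Lemma \ref{le3,1} then gives $\varrho(T)>\varrho(T'')$, so $T$ cannot have had $j\ge 2$. Iteration reduces $j$ to $1$, yielding $T\cong \mathcal{T}_1$.

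\emph{Main obstacle.} The genuinely delicate step is the cross-central comparison that enters both Stage 2 (when $w_1,w_2$ sit on opposite sides of the central edge $v_1u_1$) and the very last sub-step of Stage 3. Here the direct application of Lemma \ref{le2,3} is insufficient because the $u$-arm of $P$ is inherently heavier than the $v$-arm, so the hypothesis $S_1\ge S_2$ of Lemma \ref{le3,1} can fail on the naive decomposition. This is precisely the role of Lemma \ref{le2,4}: its quantitative inequality $x_{v_1}+\lceil(n-2k)/2\rceil x_{v_n}>x_{u_k}$ supplies the Perron-weight estimate which, when combined with a splitting of the pendant star into halves for the choice of $G_3$, restores $S_1\ge S_2$ in Lemma \ref{le3,1} and rules out any cross-central placement, completing the identification $T\cong \mathcal{T}_1$.
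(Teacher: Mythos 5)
Your Stage 1 is sound and matches the paper's first reduction (collapsing every off-path branch to a star at its path vertex, via Lemma \ref{le2,2}; the paper also uses Lemma \ref{le2,1}). The gap lies in Stages 2 and 3: each application of Lemma \ref{le3,1} there takes $G$ to be an \emph{intermediate} caterpillar $T''$, so the hypothesis $S_1\ge S_2$ must be verified for the distance Perron vector of that $T''$. But Lemma \ref{le2,3} only controls the Perron vector of a graph whose two pendant arms have lengths $k-1$ and $k$, i.e.\ essentially only when the branch vertex is $v_1$ or $u_1$. If the star sits at $v_j$ with $j\ge 3$ (so that your Stage 3 iteration passes through a tree with arms of lengths $k-j+1$ and $k+j-2$), or if in Stage 2 neither admissible consolidation point is $v_1$ or $u_1$, the two arms at $v_0$ differ in length by more than one and you have no ordering of the Perron entries, hence no proof of $S_1\ge S_2$. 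The cross-central case is worse still: there $G_1$ is the \emph{shorter} arm, $S_1\ge S_2$ can genuinely fail, and the proposed repair --- ``splitting the pendant star into halves'' together with Lemma \ref{le2,4} --- does not work as stated, because Lemma \ref{le2,4} is an inequality about the Perron vector of $\mathcal{T}_1$ rather than of your intermediate trees, and because you cannot choose how the pendants of $T$ are actually distributed: if $T$ carries one pendant at $v_1$ and many at some $u_j$, the available quantity $S_2+(\mbox{pendants at }v_1)\,x_{v_n}$ falls short of the $S_2+\lceil\frac{n-2k}{2}\rceil x_{v_n}>S_1$ that Lemma \ref{le2,4} supplies.

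The paper avoids exactly this difficulty by never chaining Lemma \ref{le3,1} through intermediate trees. After the star reduction it performs a single Rayleigh-quotient comparison $X^{T}D(\mathscr{T}')X>X^{T}D(\mathcal{T}_1)X$ with $X$ the Perron vector of $\mathcal{T}_1$ itself, bookkeeping how every pairwise distance changes when the stars are slid from $v_1$ to their actual positions; the only Perron information required is then about $\mathcal{T}_1$, namely $S_1>S_2$ (Lemma \ref{le2,3}) and, for the cross-central terms, $S_2+\sum_{i}\mathcal{C}_{a_i}>S_1$ on the heavier side (Lemma \ref{le2,4}). Your Stage 3 can be salvaged in one shot by taking $G=\mathcal{T}_1$, $v_0=v_1$, $v_a=v_j$ and $G_3$ the whole star, but Stage 2 (several attachment vertices) cannot be run as a sequence of Lemma \ref{le3,1} moves without first proving an analogue of Lemma \ref{le2,3} for arms of arbitrary lengths $p>q$. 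As written, the proposal does not establish the statement.
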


\begin{proof}
This lemma is trivial for $n=2k$.

If $n=2k+1$, suppose that $P=v_{k}v_{k-1}\cdots v_{1}u_{1}u_{2}\cdots u_{k}$ is a $D$-path in a tree $\mathcal {T}$ with diameter $d=2k-1$, and suppose the vertex $v_{n}$ is attached to vertex $v_{i}$ $(1\leq i\leq k-1)$. Denote by $X=\{x_{u_{1}}$,  $x_{u_{2}}$, $\ldots$,  $x_{u_{k}}$,  $x_{v_{1}}$,  $x_{v_{2}}$, $\ldots$,  $x_{v_{k}}$, $x_{v_{2k+1}}\}^{T}$ the distance Perron vector of $\mathcal {T}_{1}$, where $x_{w}$ corresponds to vertex $w$. Let $\displaystyle S_{1}=\sum^{k}_{i=1}x_{u_{i}}$, $\displaystyle S_{2}=\sum^{k}_{i=1}x_{v_{i}}$. By Lemma \ref{le2,3}, we see that $S_{1}> S_{2}$. By Lemma \ref{le3,1}, we get that $\varrho(\mathcal {T})\geq \varrho(\mathcal {T}_{1})$, with equality if and only if $\mathcal {T}\cong \mathcal {T}_{1}$. Hence this lemma holds for $n=2k+1$.

Next we prove this lemma holds for the case that $n\geq 2k+2$. To prove this, we let $\mathscr{T}$ be a tree with $n\geq 2k+2$ vertices and diameter $d=2k-1$, and suppose that $\mathcal {P}=v_{k}v_{k-1}\cdots v_{1}u_{1}u_{2}\cdots u_{k}$ is a $D$-path of $\mathscr{T}$. We consider three cases as follows.

{\bf Case 1} In $\mathscr{T}$, for $1\leq i\leq k-1$, $deg(u_{i})=2$, but there exists at least two vertices in $\{v_{1}$, $v_{2}$, $\ldots$, $v_{k-1}\}$ with degree more than $2$.

\setlength{\unitlength}{0.5pt}
\begin{center}
\begin{picture}(638,110)
\put(20,43){\circle*{4}}
\put(634,42){\circle*{4}}
\put(216,81){\circle*{4}}
\put(260,42){\circle*{4}}
\put(588,42){\circle*{4}}
\put(132,42){\circle*{4}}
\put(58,43){\circle*{4}}
\put(198,42){\circle*{4}}
\put(324,42){\circle*{4}}
\put(379,42){\circle*{4}}
\put(420,43){\circle*{4}}
\qbezier(20,43)(39,43)(58,43)
\put(147,79){\circle*{4}}
\put(110,42){\circle*{4}}
\put(94,42){\circle*{4}}
\put(80,42){\circle*{4}}
\put(179,42){\circle*{4}}
\put(165,42){\circle*{4}}
\put(150,42){\circle*{4}}
\put(112,80){\circle*{4}}
\qbezier(132,42)(122,61)(112,80)
\put(158,78){\circle*{4}}
\qbezier(132,42)(145,60)(158,78)
\put(136,80){\circle*{4}}
\put(125,81){\circle*{4}}
\put(179,81){\circle*{4}}
\qbezier(198,42)(188,62)(179,81)
\put(227,81){\circle*{4}}
\qbezier(198,42)(212,62)(227,81)
\put(204,82){\circle*{4}}
\put(192,81){\circle*{4}}
\put(278,42){\circle*{4}}
\put(290,42){\circle*{4}}
\put(303,42){\circle*{4}}
\put(239,43){\circle*{4}}
\put(228,43){\circle*{4}}
\put(218,43){\circle*{4}}
\put(242,85){\circle*{4}}
\qbezier(260,42)(251,64)(242,85)
\put(286,84){\circle*{4}}
\qbezier(260,42)(273,63)(286,84)
\put(350,42){\circle*{4}}
\put(362,42){\circle*{4}}
\put(253,84){\circle*{4}}
\put(338,42){\circle*{4}}
\qbezier(379,42)(506,42)(634,42)
\put(373,27){$v_{1}$}
\put(412,27){$u_{1}$}
\put(577,27){$u_{k-1}$}
\put(639,44){$u_{k}$}
\put(249,27){$v_{a_{1}}$}
\put(186,27){$v_{a_{2}}$}
\put(120,27){$v_{a_{s}}$}
\put(252,101){$\mathcal {S}_{a_{1}}$}
\put(274,84){\circle*{4}}
\put(264,84){\circle*{4}}
\put(190,99){$\mathcal {S}_{a_{2}}$}
\put(122,97){$\mathcal {S}_{a_{s}}$}
\put(-3,48){$v_{k}$}
\put(44,27){$v_{k-1}$}
\put(270,-15){Fig. 3.2. $\mathscr{T}^{'}$}
\end{picture}
\end{center}

Suppose that $T_{a_{j}}$ for $j=1$, $2$, $\ldots$, $s$ ($a_{j}<k$) are the nontrivial connected components containing $v_{a_{j}}$ in $\mathscr{T}-E(\mathcal {P})$. It is easy to see that each $T_{a_{j}}$ is a tree. Let $\mathscr{T}^{'}$ be the graph obtained from $\mathscr{T}$ by transforming each $T_{a_{j}}$ into a star
$\mathcal {S}_{a_{j}}$ with center $v_{a_{j}}$ (see Fig. 3.2). By Lemmas \ref{le2,1}, \ref{le2,2}, we get that $\varrho(\mathscr{T})\geq\varrho(\mathscr{T}^{'})$ with equality if and only if $\mathscr{T}\cong \mathscr{T}^{'}$. Denote by $X=\{x_{u_{1}}$,  $x_{u_{2}}$, $\ldots$,  $x_{u_{k}}$,  $x_{v_{1}}$,  $x_{v_{2}}$, $\ldots$,  $x_{v_{k}}$, $x_{v_{2k+1}}$, $x_{v_{2k+2}}$, $\ldots$, $x_{v_{n}}\}^{T}$ the distance Perron vector of $\mathcal {T}_{1}$, where $x_{w}$ corresponds to vertex $w$. From $\mathcal {T}_{1}$ to $\mathscr{T}^{'}$, we see that

(1) for each $\mathcal {S}_{a_{j}}$, the distance between each vertex $u_{i}$ for $1\leq i\leq k$ and each vertex in $V(\mathcal {S}_{a_{j}})\backslash \{v_{a_{j}}\}$ is increased by $a_{j}-1$;

(2) for each $\mathcal {S}_{a_{j}}$, the distance between each vertex $v_{i}$ for $1\leq i\leq k$ and each vertex in $V(\mathcal {S}_{a_{j}})\backslash \{v_{a_{j}}\}$ is decreased by at most $a_{j}-1$;

(3) for each pair of $\mathcal {S}_{a_{i}}$, $\mathcal {S}_{a_{j}}$ $(i\neq j)$, the distance between each vertex in $V(\mathcal {S}_{a_{i}})\backslash \{v_{a_{i}}\}$ and each vertex in $V(\mathcal {S}_{a_{j}})\backslash \{v_{a_{j}}\}$ is increased by $|a_{i}-a_{j}|$;

(4) the distance between each pair of vertices of $D$-path $\mathcal {P}$ is unchanged, and the distance between each pair of vertices in $V(\mathcal {S}_{a_{j}})\backslash \{v_{a_{j}}\}$ is unchanged.

Let $\displaystyle S_{1}=\sum^{k}_{i=1}x_{u_{i}}$, $\displaystyle S_{2}=\sum^{k}_{i=1}x_{v_{i}}$, and let $\displaystyle \mathcal {C}_{a_{i}}=\sum_{w\in V(\mathcal {S}_{a_{i}})\backslash \{v_{a_{i}}\}}x_{w}$ for $i=1$, $2$, $\ldots$, $s$. By Lemma \ref{le2,3}, we know that $S_{1}> S_{2}$.
Then $$X^{T}D(\mathscr{T}^{'})X-X^{T}D(\mathcal {T}_{1})X\geq 2(S_{1}- S_{2})\sum^{s}_{i=1} (a_{i}-1)\mathcal {C}_{a_{i}}+2\sum_{i\neq j} |a_{i}-a_{j}|\mathcal {C}_{a_{i}}\mathcal {C}_{a_{j}}> 0,$$ which means that $\varrho(\mathscr{T}^{'})> \varrho(\mathcal {T}_{1})$. Noting that $\varrho(\mathscr{T})\geq \varrho(\mathscr{T}^{'})$, we have $\varrho(\mathscr{T})>\varrho(\mathcal {T}_{1})$.

{\bf Case 2} In $\mathscr{T}$, for $1\leq i\leq k-1$, $deg(v_{i})=2$, but there exists at least two vertices in $\{u_{1}$, $u_{2}$, $\ldots$, $u_{k-1}\}$ with degree more than $2$. Similar to Case 1, we can prove that $\varrho(\mathscr{T})>\varrho(\mathcal {T}_{1})$.

{\bf Case 3} There exist both vertex $v_{i}$ ($1\leq i\leq k-1$) and vertex $u_{i}$ ($1\leq j\leq k-1$) such that $deg(v_{i})\geq 3$, $deg(u_{j})\geq 3$.

\setlength{\unitlength}{0.5pt}
\begin{center}
\begin{picture}(616,114)
\put(22,47){\circle*{4}}
\put(610,47){\circle*{4}}
\put(218,85){\circle*{4}}
\put(262,46){\circle*{4}}
\put(563,47){\circle*{4}}
\put(134,46){\circle*{4}}
\put(60,47){\circle*{4}}
\put(200,46){\circle*{4}}
\put(326,46){\circle*{4}}
\put(381,46){\circle*{4}}
\put(423,46){\circle*{4}}
\qbezier(22,47)(41,47)(60,47)
\put(149,83){\circle*{4}}
\put(112,46){\circle*{4}}
\put(96,46){\circle*{4}}
\put(82,46){\circle*{4}}
\put(181,46){\circle*{4}}
\put(167,46){\circle*{4}}
\put(152,46){\circle*{4}}
\put(114,84){\circle*{4}}
\qbezier(134,46)(124,65)(114,84)
\put(160,82){\circle*{4}}
\qbezier(134,46)(147,64)(160,82)
\put(138,84){\circle*{4}}
\put(127,85){\circle*{4}}
\put(181,85){\circle*{4}}
\qbezier(200,46)(190,66)(181,85)
\put(229,85){\circle*{4}}
\qbezier(200,46)(214,66)(229,85)
\put(206,86){\circle*{4}}
\put(194,85){\circle*{4}}
\put(280,46){\circle*{4}}
\put(292,46){\circle*{4}}
\put(305,46){\circle*{4}}
\put(241,47){\circle*{4}}
\put(230,47){\circle*{4}}
\put(220,47){\circle*{4}}
\put(244,89){\circle*{4}}
\qbezier(262,46)(253,68)(244,89)
\put(288,88){\circle*{4}}
\qbezier(262,46)(275,67)(288,88)
\put(352,46){\circle*{4}}
\put(364,46){\circle*{4}}
\put(255,88){\circle*{4}}
\put(340,46){\circle*{4}}
\put(375,27){$v_{1}$}
\put(414,27){$u_{1}$}
\put(556,27){$u_{k-1}$}
\put(617,48){$u_{k}$}
\put(251,27){$v_{a_{1}}$}
\put(188,27){$v_{a_{2}}$}
\put(122,27){$v_{a_{r}}$}
\put(254,105){$\mathcal {S}_{a_{1}}$}
\put(276,88){\circle*{4}}
\put(266,88){\circle*{4}}
\put(192,103){$\mathcal {S}_{a_{2}}$}
\put(124,101){$\mathcal {S}_{a_{r}}$}
\put(-1,52){$v_{k}$}
\put(46,27){$v_{k-1}$}
\put(257,-11){Fig. 3.3. $\mathscr{T}^{'}$}
\qbezier(381,46)(402,46)(423,46)
\qbezier(563,47)(586,47)(610,47)
\put(535,85){\circle*{4}}
\put(469,46){\circle*{4}}
\put(456,46){\circle*{4}}
\put(445,46){\circle*{4}}
\put(434,46){\circle*{4}}
\put(438,82){\circle*{4}}
\qbezier(469,46)(453,64)(438,82)
\put(480,83){\circle*{4}}
\qbezier(469,46)(474,65)(480,83)
\put(515,46){\circle*{4}}
\put(503,47){\circle*{4}}
\put(493,46){\circle*{4}}
\put(483,46){\circle*{4}}
\put(499,84){\circle*{4}}
\qbezier(515,46)(507,65)(499,84)
\put(545,84){\circle*{4}}
\qbezier(515,46)(530,65)(545,84)
\put(550,47){\circle*{4}}
\put(539,47){\circle*{4}}
\put(529,47){\circle*{4}}
\put(523,84){\circle*{4}}
\put(512,83){\circle*{4}}
\put(471,82){\circle*{4}}
\put(458,82){\circle*{4}}
\put(447,82){\circle*{4}}
\put(457,27){$u_{b_{1}}$}
\put(502,27){$u_{b_{2}}$}
\put(446,98){$\mathcal {S}_{b_{1}}$}
\put(510,100){$\mathcal {S}_{b_{t}}$}
\end{picture}
\end{center}

Suppose that in $\mathscr{T}-E(\mathcal {P})$, $T_{a_{j}}$ for $j=1$, $2$, $\ldots$, $r$ ($a_{j}<k$) are the nontrivial connected components containing $v_{a_{j}}$, and $T_{b_{j}}$ for $j=1$, $2$, $\ldots$, $t$ ($b_{j}<k$) are the nontrivial connected components containing $u_{b_{j}}$. Clearly, each $T_{a_{j}}$ and each $T_{b_{j}}$ is a tree. We suppose that
$\sum^{r}_{j=1}(|V(T_{a_{j}})|-1)\geq \sum^{t}_{j=1}(|V(T_{b_{j}})|-1)$. Let $\mathscr{T}^{'}$ be the graph obtained from $\mathscr{T}$ by transforming each $T_{b_{j}}$ into a star $\mathcal {S}_{b_{j}}$ with center $u_{b_{j}}$, and transforming each $T_{a_{j}}$ into a star $\mathcal {S}_{a_{j}}$ with center $v_{a_{j}}$ respectively (see Fig. 3.3). By Lemmas \ref{le2,1}, \ref{le2,2}, we get that $\varrho(\mathscr{T})\geq\varrho(\mathscr{T}^{'})$ with equality if and only if $\mathscr{T}\cong \mathscr{T}^{'}$. Denote by $X=\{x_{u_{1}}$,  $x_{u_{2}}$, $\ldots$,  $x_{u_{k}}$,  $x_{v_{1}}$,  $x_{v_{2}}$, $\ldots$,  $x_{v_{k}}$, $x_{v_{2k+1}}$, $x_{v_{2k+2}}$, $\ldots$, $x_{v_{n}}\}^{T}$ the distance Perron vector of $\mathcal {T}_{1}$, where $x_{w}$ corresponds to vertex $w$. From $\mathcal {T}_{1}$ to $\mathscr{T}^{'}$, we see that

(1) for each $\mathcal {S}_{a_{j}}$, the distance between each vertex $u_{i}$ for $1\leq i\leq k$ and each vertex in $V(\mathcal {S}_{a_{j}})\backslash \{v_{a_{j}}\}$ is increased by $a_{j}-1$;

(2) for each $\mathcal {S}_{a_{j}}$, the distance between each vertex $v_{i}$ for $1\leq i\leq k$ and each vertex in $V(\mathcal {S}_{a_{j}})\backslash \{v_{a_{j}}\}$ is decreased by at most $a_{j}-1$;

(3) for each $\mathcal {S}_{b_{j}}$, the distance between each vertex $v_{i}$ for $1\leq i\leq k$ and each vertex in $V(\mathcal {S}_{b_{j}})\backslash \{u_{b_{j}}\}$ is increased by $b_{j}$;

(4) for each $\mathcal {S}_{b_{j}}$, the distance between each vertex $u_{i}$ for $1\leq i\leq k$ and each vertex in $V(\mathcal {S}_{b_{j}})\backslash \{u_{b_{j}}\}$ is decreased by at most $b_{j}$;

(5) for each pair of $\mathcal {S}_{a_{i}}$, $\mathcal {S}_{a_{j}}$ $(i\neq j)$, the distance between each vertex in $V(\mathcal {S}_{a_{i}})\backslash \{v_{a_{i}}\}$ and each vertex in $V(\mathcal {S}_{a_{j}})\backslash \{v_{a_{j}}\}$ is increased by $|a_{i}-a_{j}|$;

(6) for each pair of $\mathcal {S}_{b_{i}}$, $\mathcal {S}_{b_{j}}$ $(i\neq j)$, the distance between each vertex in $V(\mathcal {S}_{b_{i}})\backslash \{u_{b_{i}}\}$ and each vertex in $V(\mathcal {S}_{b_{j}})\backslash \{u_{b_{j}}\}$ is increased by $|b_{i}-b_{j}|$;

(6) for each pair of $\mathcal {S}_{a_{i}}$, $\mathcal {S}_{b_{j}}$, the distance between each vertex in $V(\mathcal {S}_{a_{i}})\backslash \{v_{a_{i}}\}$ and each vertex in $V(\mathcal {S}_{b_{j}})\backslash \{u_{b_{j}}\}$ is increased by $|a_{i}+b_{j}-1|$;

(7) the distance between each pair of vertices of the $D$-path $\mathcal {P}$ is unchanged, the distance between each pair of vertices in $V(\mathcal {S}_{a_{j}})\backslash \{v_{a_{j}}\}$ and the distance between each pair of vertices in $V(\mathcal {S}_{b_{j}})\backslash \{u_{b_{j}}\}$ are unchanged as well.

Let $\displaystyle S_{1}=\sum^{k}_{i=1}x_{u_{i}}$, $\displaystyle S_{2}=\sum^{k}_{i=1}x_{v_{i}}$, and let $\displaystyle \mathcal {C}_{a_{i}}=\sum_{w\in V(\mathcal {S}_{a_{i}})\backslash \{v_{a_{i}}\}}x_{w}$ for $i=1$, $2$, $\ldots$, $r$, and let $\displaystyle \mathcal {C}_{b_{i}}=\sum_{w\in V(\mathcal {S}_{b_{i}})\backslash \{u_{b_{i}}\}}x_{w}$ for $i=1$, $2$, $\ldots$, $t$. By Lemmas \ref{le2,3}, \ref{le2,4}, we know that $S_{1}> S_{2}$ and $\displaystyle  S_{2}+\sum^{r}_{i=1}\mathcal {C}_{a_{i}}> S_{1}$.
Then $$X^{T}D(\mathscr{T}^{'})X-X^{T}D(\mathcal {T}_{1})X\ \ \ \ \ \ \ \ \ \ \ \ \ \ \ \ \ \ \ \ \ \ \ \ \ \ \ \ \ \ \ \ \ \ \ \ \ \ \ \ \ \ \ \ \ \ \ \ \ $$
$$\geq 2(S_{2}+\sum^{r}_{i=1}\mathcal {C}_{a_{i}}- S_{1})\sum^{s}_{i=1} b_{i}\mathcal {C}_{b_{i}}+2(S_{1}- S_{2})\sum^{s}_{i=1} (a_{i}-1)\mathcal {C}_{a_{i}}$$$$+2\sum_{i\neq j} |b_{i}-b_{j}|\mathcal {C}_{b_{i}}\mathcal {C}_{b_{j}}+2\sum_{i\neq j} |a_{i}-a_{j}|\mathcal {C}_{a_{i}}\mathcal {C}_{a_{j}}> 0.   \ \ \ \ \ \ \ \ $$
 Hence for this case, we have $\varrho(\mathscr{T})>\varrho(\mathcal {T}_{1})$.

From the above three cases, we see that for a tree $\mathscr{T}$ with $n\geq 2k+2$ vertices and diameter $d=2k-1$, $\varrho(\mathscr{T})\geq\varrho(\mathcal {T}_{1})$ with equality if and only if $\mathscr{T}\cong \mathcal {T}_{1}$.

To sum up, we get that for a tree $T$ with $n$ vertices and diameter $d=2k-1$ $(k\geq 2)$, $\varrho(T)\geq\varrho(\mathcal {T}_{1})$ with equality if and only if $T\cong \mathcal {T}_{1}$.
This completes the proof. \ \ \ \ \ $\Box$

\end{proof}

\setlength{\unitlength}{0.6pt}
\begin{center}
\begin{picture}(563,108)
\put(19,46){\circle*{4}}
\put(558,46){\circle*{4}}
\qbezier(19,46)(288,46)(558,46)
\put(336,46){\circle*{4}}
\put(56,46){\circle*{4}}
\put(512,47){\circle*{4}}
\put(217,47){\circle*{4}}
\put(255,46){\circle*{4}}
\put(299,47){\circle*{4}}
\put(92,46){\circle*{4}}
\put(248,34){$v_{0}$}
\put(212,34){$v_{1}$}
\put(-1,45){$v_{k}$}
\put(42,32){$v_{k-1}$}
\put(80,32){$v_{k-2}$}
\put(292,33){$u_{1}$}
\put(329,33){$u_{2}$}
\put(564,46){$u_{k}$}
\put(505,34){$u_{k-1}$}
\put(219,-9){Fig. 3.4. $\mathcal {T}_{2}$}
\put(219,76){\circle*{4}}
\qbezier(255,46)(237,61)(219,76)
\put(238,88){\circle*{4}}
\qbezier(255,46)(246,67)(238,88)
\put(299,76){\circle*{4}}
\qbezier(255,46)(277,61)(299,76)
\put(282,82){\circle*{4}}
\put(269,85){\circle*{4}}
\put(257,87){\circle*{4}}
\put(190,82){$v_{2k+1}$}
\put(224,96){$v_{2k+2}$}
\put(305,74){$v_{n-1}$}
\put(176,46){\circle*{4}}
\put(170,33){$v_{2}$}
\end{picture}
\end{center}

Denote by $\mathcal {T}_{2}$ the graph obtained by attaching $n-2k-1$ pendant edges to vertex $v_{0}$ of path $P_{2k+1}=v_{k}v_{k-1}\cdots v_{1}v_{0}u_{1}u_{2}\cdots u_{k}$ (see Fig. 3.4).

Note that by symmetry, for any eigenvector $X=\{x_{u_{1}}$,  $x_{u_{2}}$, $\ldots$,  $x_{u_{k}}$,  $x_{v_{0}}$,  $x_{v_{1}}$,  $x_{v_{2}}$, $\ldots$,  $x_{v_{k}}$, $\ldots$, $x_{v_{2k+1}}$, $\ldots$, $x_{v_{n-1}}\}^{T}$ corresponding to $\varrho(\mathcal {T}_{2})$, there must be $x_{u_{i}}=x_{v_{i}}$ for $1\leq i\leq k$ and $x_{i_{i}}=x_{v_{j}}$ for $2k+1\leq i<j\leq n-1$.
Similar to the proof of Lemma \ref{le3,2}, we can prove the following lemma which has been shown in \cite{DIF} and \cite{Y.J.Z}.

\begin{lemma}{\bf \cite{DIF}, \cite{Y.J.Z}}\label{le3,3} 
Of all the trees with $n$ vertices and diameter $d=2k$ $(k\geq 1)$, the
minimal distance spectral radius is obtained uniquely at $\mathcal {T}_{2}$.

\end{lemma}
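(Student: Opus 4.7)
The plan is to mirror the template of Lemma \ref{le3,2}, exploiting the extra symmetry of $\mathcal{T}_2$ available in the even-diameter case. For $n = 2k+1$ the only tree of diameter $2k$ is $P_{2k+1} = \mathcal{T}_2$ and the statement is trivial, so assume $n \geq 2k+2$. Let $\mathscr{T}$ be any such tree and fix a $D$-path $\mathcal{P} = v_k v_{k-1} \cdots v_1 v_0 u_1 u_2 \cdots u_k$. The nontrivial components of $\mathscr{T} - E(\mathcal{P})$ are attached at internal vertices of $\mathcal{P}$; denote them $T_{a_1}, \ldots, T_{a_r}$ (attached at $v_{a_j}$ with $0 \leq a_j \leq k-1$) and $T_{b_1}, \ldots, T_{b_t}$ (attached at $u_{b_j}$ with $1 \leq b_j \leq k-1$).

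First I would star-ify, by repeated application of Lemmas \ref{le2,1} and \ref{le2,2}, converting each $T_{a_j}$ into a star $\mathcal{S}_{a_j}$ centered at $v_{a_j}$ and each $T_{b_j}$ into a star $\mathcal{S}_{b_j}$ centered at $u_{b_j}$. This yields a tree $\mathscr{T}'$ with $\varrho(\mathscr{T}) \geq \varrho(\mathscr{T}')$, equality iff $\mathscr{T} \cong \mathscr{T}'$. It then suffices to show $\varrho(\mathscr{T}') \geq \varrho(\mathcal{T}_2)$, with equality iff all stars are centered at $v_0$.

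For this I compare quadratic forms using the distance Perron vector $X$ of $\mathcal{T}_2$. By the reflective symmetry of $\mathcal{T}_2$ fixing $v_0$ and the pendant cluster, one has $x_{v_i} = x_{u_i} =: y_i$ for $1 \leq i \leq k$, hence $S_1 = S_2$ where $\displaystyle S_1 = \sum_{i=1}^{k} x_{u_i}$ and $\displaystyle S_2 = \sum_{i=1}^{k} x_{v_i}$. I identify the vertex sets of $\mathscr{T}'$ and $\mathcal{T}_2$ by keeping $\mathcal{P}$ fixed and reinterpreting each pendant of $\mathcal{S}_{a_j}$ (resp.\ $\mathcal{S}_{b_j}$) as a pendant relocated from $v_0$ to $v_{a_j}$ (resp.\ $u_{b_j}$). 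Computing the entrywise distance change for one such move and weighting by the Perron entries, the symmetry $S_1 = S_2$ should collapse the weighted sum to $a_j x_{v_0} + 2\sum_{i=1}^{a_j - 1}(a_j - i) y_i \geq 0$ per pendant of $\mathcal{S}_{a_j}$ (and analogously $b_j x_{v_0} + 2\sum_{i=1}^{b_j - 1}(b_j - i) y_i \geq 0$ for $\mathcal{S}_{b_j}$); the remaining star-to-star distance changes contribute the non-negative terms $|a_i - a_j|\mathcal{C}_{a_i}\mathcal{C}_{a_j}$, $|b_i - b_j|\mathcal{C}_{b_i}\mathcal{C}_{b_j}$, and $(a_i + b_j)\mathcal{C}_{a_i}\mathcal{C}_{b_j}$ to $X^T D(\mathscr{T}')X - X^T D(\mathcal{T}_2)X$, where $\mathcal{C}_\bullet$ denotes the sum of Perron weights of the pendants of the corresponding star.

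Summing, $X^T D(\mathscr{T}')X - X^T D(\mathcal{T}_2)X \geq 0$, strictly positive whenever any $a_j \geq 1$ or $b_j \geq 1$; the Rayleigh inequality then gives $\varrho(\mathscr{T}') > \varrho(\mathcal{T}_2)$ in that case. Combined with the star-ification step, this yields $\varrho(\mathscr{T}) \geq \varrho(\mathcal{T}_2)$ with equality iff $\mathscr{T} \cong \mathcal{T}_2$. I expect the main obstacle to be the bookkeeping of distance changes, particularly for the hybrid $v$-side/$u$-side star interactions; but because the symmetry of $\mathcal{T}_2$ gives $S_1 = S_2$ (in place of the strict $S_1 > S_2$ used for Lemma \ref{le3,2}), the problematic asymmetric term of the odd-diameter analysis is replaced by manifestly non-negative displacement sums, and no analogue of Lemma \ref{le2,4} should be needed here.
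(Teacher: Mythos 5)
Your proposal is correct and follows essentially the same route as the paper, which simply declares the even-diameter case ``similar to the proof of Lemma \ref{le3,2}'' after recording the key symmetry $x_{u_{i}}=x_{v_{i}}$ of the Perron vector of $\mathcal{T}_{2}$. Your per-pendant displacement computation $a_{j}x_{v_{0}}+2\sum_{i=1}^{a_{j}-1}(a_{j}-i)y_{i}>0$ for $a_{j}\geq 1$ (and its $u$-side analogue) checks out, and it correctly explains why $S_{1}=S_{2}$ dispenses with both the three-case analysis and any analogue of Lemma \ref{le2,4}.
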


Combining with Lemmas \ref{le3,2}, \ref{le3,3}, we get the following result.

\begin{theorem}\label{th3,2} 
(i) Of all the trees with $n$ vertices and diameter $d=2k-1$ $(k\geq 2)$, the
minimal distance spectral radius is obtained uniquely at $\mathcal {T}_{1}$;

(ii) Of all the trees with $n$ vertices and diameter $d=2k$ $(k\geq 1)$, the
minimal distance spectral radius is obtained uniquely at $\mathcal {T}_{2}$.

\end{theorem}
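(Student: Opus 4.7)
The plan is to combine the two preceding lemmas directly, partitioning the family of trees by the parity of the diameter. Given a tree $T$ on $n$ vertices with diameter $d$, I would write $d=2k-1$ with $k\geq 2$ in the odd case and $d=2k$ with $k\geq 1$ in the even case; the degenerate case $d=1$ (i.e.\ $T=K_{2}$) is vacuous since then $n=2$ and no comparison is needed.

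For part (i), I would simply invoke Lemma \ref{le3,2}, which already establishes that among all trees of order $n$ with odd diameter $d=2k-1$ the distance spectral radius is minimized uniquely by $\mathcal {T}_{1}$. For part (ii), I would likewise invoke Lemma \ref{le3,3}, which gives the analogous statement for even diameter $d=2k$ with unique minimizer $\mathcal {T}_{2}$. Since every positive integer $d$ is either odd or even, these two statements together exhaust all possibilities and the theorem follows.

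There is essentially no new obstacle at this stage: the substantive work has already been done in Lemmas \ref{le3,2} and \ref{le3,3} (and in turn in the structural reductions of Lemmas \ref{le2,1}--\ref{le2,4} and \ref{le3,1}). The only point worth remarking on is the natural matching of the two extremal families with the parity of the diameter: $\mathcal {T}_{1}$ is built on the path $P_{2k}$ of length $2k-1$, so it realizes odd diameter, whereas $\mathcal {T}_{2}$ is built on the path $P_{2k+1}$ of length $2k$, so it realizes even diameter. Thus the theorem is a clean consolidation of the odd and even cases, presented as a single extremal characterization parametrized by $\lfloor d/2\rfloor$, without any further argument required.
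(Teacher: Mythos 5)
Your proposal is correct and matches the paper exactly: the paper states the theorem as an immediate consequence of Lemmas \ref{le3,2} and \ref{le3,3}, with no additional argument. Your remarks on the parity matching of $\mathcal{T}_{1}$ with $P_{2k}$ and $\mathcal{T}_{2}$ with $P_{2k+1}$ are accurate but not needed beyond the direct citation.
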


\noindent{\bf Acknowledgment}

The authors offers many thanks to the referees for their kind comments and helpful
suggestions.

\small {

}

\end{document}